\documentclass[11pt]{article}

\usepackage[T1]{fontenc} \usepackage[utf8]{inputenc} \usepackage{lmodern} \usepackage[english]{babel}

\usepackage{amsmath,amssymb,amsthm,mathtools} \usepackage{mathrsfs} \usepackage{bbm} \usepackage{stmaryrd}

\usepackage{graphicx} \usepackage{float} \usepackage{changepage} \usepackage[ labelsep=period, font=footnotesize, labelfont=bf ]{caption}

\usepackage{enumitem}

\usepackage[ margin=1in ]{geometry}

 \allowdisplaybreaks

\usepackage{comment}

\usepackage{xcolor}

\usepackage[ colorlinks=true, linkcolor=black, citecolor=black, urlcolor=black ]{hyperref}

\usepackage[nameinlink]{cleveref}

\theoremstyle{plain} \newtheorem{theorem}{Theorem}[section]   \newtheorem{conjecture}[theorem]{Conjecture}

\theoremstyle{definition} \newtheorem{definition}[theorem]{Definition}

\theoremstyle{remark} 

\DeclareMathAlphabet{\mathbbmsl}{U}{bbm}{m}{sl}

\newcommand{\Zn}{\mathbbmsl{Z}_n} \newcommand{\ii}{\mathrm{i}}

\title{\Large\bfseries A Search for Hadamard Matrices of Near-Williamson Type}

\author{ Hadi Kharaghani\thanks{Department of Mathematics and Computer Science, University of Lethbridge, Lethbridge, AB T1K 3M4, Canada. Email: \texttt{kharaghani@uleth.ca}} \and Ali Mohammadian\thanks{School of Mathematics, Institute for Research in Fundamental Sciences (IPM), Tehran, Iran. Email: \texttt{ali\_m@ipm.ir}} \and Behruz Tayfeh-Rezaie\thanks{School of Mathematics, Institute for Research in Fundamental Sciences (IPM), Tehran, Iran. Email: \texttt{tayfeh-r@ipm.ir}} \and Vlad Zaitsev\thanks{Department of Mathematics and Computer Science, University of Lethbridge, Lethbridge, AB T1K 3M4, Canada. Email: \texttt{vlad.zaitsev@uleth.ca}} }

\begin{document}

\maketitle

\begin{abstract}
We investigate a generalization of Williamson matrices, which we call \emph{near-Williamson matrices}. These consist of four circulant $\{\pm1\}$-matrices $A,B,C,D$ of odd order $n$ satisfying $AA^\top+BB^\top+CC^\top+DD^\top=4nI,$ where $B$, $C$, and $D$ are symmetric.

Using an exhaustive computer search, we classify all inequivalent near-Williamson matrices of odd orders $n\le 47$ and establish their existence for every odd order $n\le 69$. As applications, we obtain a new quaternary Hadamard matrix of order $94$ and construct the first known quaternary Hadamard matrices of orders $118$, $130$, $134$, $206$, and $266$.

 \vspace{2mm} \noindent\textbf{Keywords:} Back-circulant matrix; circulant matrix; quaternary Hadamard matrix; Williamson-type Hadamard matrix. \smallskip

\noindent\textbf{AMS Subject Classification (2020):} 05B20, 15B34.
\end{abstract}

\section{Introduction}

A Hadamard matrix of order $n$ is an $n\times n$ $\{\pm1\}$-matrix $H$ satisfying $HH^\top=nI .$ It is well known that the order of a Hadamard matrix must be $1$, $2$, or a multiple of $4$ \cite{pal}. The Hadamard conjecture asserts that a Hadamard matrix exists for every positive integer divisible by $4$.

A matrix $A=[a_{ij}]_{0\le i,j\le n-1}$ is \emph{circulant} if $a_{ij}=a_{0,j-i}$ for all $i,j$ and \emph{back-circulant} if $a_{ij}=a_{0,i+j}$ for all $i,j$, where the indices are taken modulo $n$. Both circulant and back-circulant matrices are uniquely determined by their first rows. If $A$ is circulant and symmetric, then $a_{0k}=a_{0,n-k}$ for all $k$. Any two circulant matrices commute. Every back-circulant matrix is symmetric. Moreover, if $A$ is circulant and $B$ is back-circulant, then $AB$ is back-circulant and $AB = BA^\top .$ In particular, every symmetric circulant matrix commutes with every back-circulant matrix.
We denote by $R$ the back-circulant matrix whose first row is $(0,\ldots,0,1)$.

In 1944 Williamson introduced the array \[ W=
\begin{bmatrix}
A & B & C & D\\
-B & A & D & -C\\
-C & -D & A & B\\
-D & C & -B & A
\end{bmatrix}
\] for the construction of Hadamard matrices.
More precisely, let $A,B,C,D$ be $n\times n$ matrices with entries from $\{\pm1\}$. If they satisfy the \emph{additivity condition}
\begin{equation}
AA^\top+BB^\top+CC^\top+DD^\top=4nI,
\label{additivity}
\end{equation}
and the pairwise \emph{amicability condition}
\begin{equation}
XY^\top=YX^\top
\qquad
\text{for all }
X,Y\in\{A,B,C,D\},
\label{amicability}
\end{equation}
then $W$ is a Hadamard matrix of order $4n$. Such a quadruple $\{A,B,C,D\}$ is called a set of \emph{Williamson-type matrices} of order $n$.

Williamson \cite{w1} focused on the case where all four matrices are symmetric circulant matrices. A quadruple $\{A,B,C,D\}$ of symmetric circulant $\{\pm1\}$-matrices satisfying \eqref{additivity} is called a set of \emph{Williamson matrices}. Since symmetric circulant matrices commute, condition \eqref{amicability} is automatically satisfied. Several related plug-in constructions for Hadamard matrices have also been developed and investigated; see, for example, \cite{seb}. The Williamson construction was first implemented by computer search by Baumert, Golomb, and Hall \cite{H92}, who discovered a Hadamard matrix of order $92$. At that time, order $92$ was the smallest order for which a Hadamard matrix had not been constructed using the classical methods of Sylvester \cite{syl} and Paley \cite{pal}.

For many years it was believed that Williamson matrices might exist in every order. However, {\DJ}okovi\'c \cite{drag1} carried out an exhaustive search and found    that Williamson matrices do not exist in order $35$. Subsequently, Holzmann, Kharaghani, and Tayfeh-Rezaie \cite{Tay59} showed that Williamson matrices also do not exist in orders $47$, $53$, and $59$. They classified all inequivalent Williamson matrices of odd orders $n\le 59$. The corresponding numbers are shown in Table~\ref{xwill}. It is worth noting that the only known infinite family of Williamson matrices occurs in orders $\frac{q+1}{2},$ where $q$ is a prime power satisfying $q\equiv1\pmod4$ \cite{Turyn}.


\begin{table}[htbp]
\centering
\begin{tabular}{cc@{\qquad}cc}
\hline
$n$ & \texttt{\#} & $n$ & \texttt{\#} \\
\hline
 1 & 1 & 31 & 2 \\
 3 & 1 & 33 & 5 \\
 5 & 1 & 35 & 0 \\
 7 & 2 & 37 & 4 \\
 9 & 3 & 39 & 1 \\
11 & 1 & 41 & 1 \\
13 & 4 & 43 & 2 \\
15 & 4 & 45 & 1 \\
17 & 4 & 47 & 0 \\
19 & 6 & 49 & 1 \\
21 & 7 & 51 & 2 \\
23 & 1 & 53 & 0 \\
25 & 10 & 55 & 1 \\
27 & 6 & 57 & 1 \\
29 & 1 & 59 & 0 \\
\hline
\end{tabular}
\caption{The number of inequivalent Williamson matrices of odd orders $n \leqslant 59$ \cite{Tay59}.}\label{xwill}
\end{table}

Another important class of Williamson-type matrices has been studied extensively in the literature. Four circulant $\{\pm1\}$-matrices $A,B,C,D$ of order $n$ are called \emph{good matrices} if they satisfy \eqref{additivity}, the matrix $A-I$ is skew-symmetric, and $B,C,D$ are symmetric. In this situation, $AR,\; B,\; C,\; D$ satisfy \eqref{amicability}, and therefore form a set of Williamson-type matrices. Good matrices were introduced by Wallis in 1970 and were used to construct a skew-Hadamard matrix of order $92$ \cite{skew92}, which was then the smallest unresolved order for skew-Hadamard matrices.

As in the Williamson case, only a relatively small number of inequivalent examples exist in each admissible order. See \cite{GOOD,G-S,bad39,27-13} for details. In this paper we study a broader class that contains both Williamson matrices and good matrices.

\begin{definition}
Four circulant $\{\pm1\}$-matrices $A,B,C,D$ of order $n$ are called \emph{near-Williamson matrices} if they satisfy \eqref{additivity} and the matrices $B$, $C$, and $D$ are symmetric.
\end{definition}

Since $B$, $C$, and $D$ are symmetric, $AR,\; B,\; C,\; D$ satisfy \eqref{amicability}; hence $\{AR,B,C,D\}$ is a set of Williamson-type matrices. Near-Williamson matrices were introduced by Dutta, Mahanti, and Singh \cite{dutt}. Their exhaustive search in orders up to $17$ indicated that near-Williamson matrices are substantially more abundant than Williamson matrices. The main objective of this paper is to investigate the existence and classification of near-Williamson matrices in larger orders. In Section~\ref{result} we classify all inequivalent near-Williamson matrices of odd orders $n\le 47$. We also establish the existence of near-Williamson matrices for every odd order $n\le 69$. As an application, Section~\ref{complexxx} presents a new quaternary Hadamard matrix of order $94$ together with the first known examples of quaternary Hadamard matrices of orders $118$, $130$, $134$, $206$, and $266$. 
A very efficient search algorithm and computational techniques used in this work are described in Section~\ref{alg}.

\section{Near-Williamson Matrices}
\label{result}

We begin by defining the equivalence relation used throughout the paper. Let $X=[x_{ij}]_{0\le i,j\le n-1}$ be an $n\times n$ matrix and let $\sigma$ be an automorphism of the additive group $\Zn=\{0,1,\ldots,n-1\}.$ Define $X^\sigma= [x_{\sigma(i)\sigma(j)}]_{0\le i,j\le n-1}.$ It is straightforward to verify that the image of a circulant matrix under an automorphism of $\Zn$ is again circulant. A \emph{signed permutation matrix} is a matrix with entries from $\{-1,0,1\}$ having exactly one nonzero entry in every row and every column.

Two quadruples $\{A,B,C,D\} \qquad\text{and}\qquad \{A',B',C',D'\}$ of circulant $\{\pm1\}$-matrices of order $n$ are said to be \emph{equivalent} if, after a suitable permutation of $A',B',C',D'$, there exist

\begin{itemize}
\item an automorphism $\sigma$ of $\Zn$, and \item circulant signed permutation matrices $P,Q,S,T$
\end{itemize}
such that \[ A'=A^\sigma P, \qquad B'=B^\sigma Q, \qquad C'=C^\sigma S, \qquad D'=D^\sigma T. \]

\subsection{Exhaustive Search}

Using the algorithm described in Section~\ref{alg}, we performed an exhaustive computer search for near-Williamson matrices of odd orders $n\le 47.$ The search produced a complete classification of the equivalence classes in this range. The numbers of inequivalent near-Williamson matrices are displayed in Table~\ref{xnear}. The full database is available electronically at \cite{tayhome, zait}. The total computation time for all orders was approximately 450 hours on DRAC 192-core systems, corresponding to roughly ten CPU-core-years. The results for orders up to 37 were independently verified by two of the authors, using two different programs implemented and run separately on two different computers.

A striking feature of Table~\ref{xnear} is the rapid growth in the number of equivalence classes as the order increases. In contrast with Williamson matrices, near-Williamson matrices appear to be comparatively abundant. The available data suggest the following natural conjecture.

\begin{conjecture}
Near-Williamson matrices exist for every odd order.
\end{conjecture}

\begin{table}[htbp]
\centering
\begin{tabular}{rr@{\qquad}rr}
\hline
$n$ & \texttt{\#} & $n$ & \texttt{\#} \\
\hline
 1 & 1 & 25 & 2843 \\
 3 & 1 & 27 & 6579 \\
 5 & 1 & 29 & 6744 \\
 7 & 3 & 31 & 11837 \\
 9 & 5 & 33 & 57003 \\
11 & 5 & 35 & 73630 \\
13 & 24 & 37 & 56508 \\
15 & 96 & 39 & 206424 \\
17 & 96 & 41 & 170197 \\
19 & 200 & 43 & 274814 \\
21 & 1004 & 45 & 1406536 \\
23 & 827 & 47 & 654041 \\
\hline
\end{tabular}
\caption{Number of inequivalent near-Williamson matrices of odd orders $n \le 47$.}
\label{xnear}
\end{table}

\subsection{Good Matrices}

Good matrices form a distinguished subclass of near-Williamson matrices. Indeed, a quadruple $\{A,B,C,D\}$ of near-Williamson matrices is a set of good matrices precisely when $A-I$ is skew-symmetric. It was shown \cite{G-S,GOOD} that for odd orders $n\le 51$, good matrices exist if and only if $n\notin \{41,47,49,51\}.$ Using the exhaustive database of near-Williamson matrices obtained in the previous subsection, we examined every equivalence class of odd order $n\le 47$ to determine whether it contains a representative for which $A-I$ is skew-symmetric.

The resulting numbers are presented in Table~\ref{xgood}. These data provide an independent verification of the known existence results for good matrices obtained in \cite{GOOD,G-S,bad39,27-13}.

\begin{table}[htbp]
\centering
\begin{tabular}{cc@{\qquad}cc}
\hline
$n$ & \texttt{\#} & $n$ & \texttt{\#} \\
\hline
 1 & 1 & 25 & 9 \\
 3 & 1 & 27 & 13 \\
 5 & 1 & 29 & 5 \\
 7 & 3 & 31 & 3 \\
 9 & 1 & 33 & 15 \\
11 & 3 & 35 & 6 \\
13 & 6 & 37 & 2 \\
15 & 11 & 39 & 5 \\
17 & 2 & 41 & 0 \\
19 & 8 & 43 & 1 \\
21 & 10 & 45 & 4 \\
23 & 6 & 47 & 0 \\
\hline
\end{tabular}
\caption{Number of equivalence classes of near-Williamson matrices of odd orders $n \le 47$ that contain a set of good matrices.}
\label{xgood}
\end{table}

\subsection{Existence Search}

For orders in which Williamson matrices are either known not to exist or have not yet been found, namely $n\in\{53,59,65,67\},$ we carried out additional searches for near-Williamson matrices. For orders $53$ and $59$, we imposed the condition that the circulant matrix $A=[a_{ij}]_{0\le i,j\le n-1}$ be \emph{almost symmetric}, meaning that $a_{0k}=a_{0,n-k}, \qquad k=2,\ldots,n-2.$ For orders $65$ and $67$, examples were found using an improved search technique described in Section~\ref{improvement}. Together with results of Lang and Schneider \cite{LS}, these computations establish the existence of near-Williamson matrices for every odd order $n\le 69.$

Explicit examples in orders $53$, $59$, $65$, and $67$ are given in Tables~\ref{nearW53}--\ref{nearW67}. In those tables, $B$, $C$, and $D$ are symmetric, and the symbol ``$-$'' denotes the value $-1$. Our computations also produced examples of near-Williamson matrices in orders $73,\; 79,\; 91,\; 93,\; 95,\; 97,\; 103,\; 109,\; 127,\; 133,\; 181,$ which are available electronically at \cite{zait}. Recall that every set of Williamson matrices and every set of good matrices is, by definition, a set of near-Williamson matrices. Moreover, Turyn's infinite family \cite{Turyn} provides examples in every order $\frac{q+1}{2},$ where $q$ is a prime power satisfying $q\equiv1\pmod4.$ This accounts for the orders $79$, $91$, $97$, and $181$.  
In addition, good matrices of order $127$ were constructed by {\DJ}okovi\'c in 1993 \cite{drag2}. Consequently, these five orders were already known prior to the present work. However, the five orders we present are strictly near-Williamson, and are not equivalent to the known Williamson or good matrices.

\begin{table}[ht]
\begin{adjustwidth}{-1.5cm}{-1.5cm}
\centering
\scriptsize
\begin{tabular}{|l|l|}
\hline
$A$ &
$ 1 \, 1 \, - \, - \, - \, 1 \, 1 \, - \, 1 \, 1 \, - \, - \, - \, - \,
1 \, - \, 1 \, - \, 1 \, - \, 1 \, - \, - \, 1 \, 1 \, 1 \, 1 \, 1 \,
1 \, 1 \, 1 \, - \, - \, 1 \, - \, 1 \, - \, 1 \, - \, 1 \, - \, - \,
- \, - \, 1 \, 1 \, - \, 1 \, 1 \, - \, - \, - \, - $
\\
\hline
$B$ &
$ 1 \, - \, - \, - \, 1 \, 1 \, - \, 1 \, - \, 1 \, - \, - \, 1 \,
1 \, - \, 1 \, - \, - \, 1 \, 1 \, 1 \, 1 \, 1 \, 1 \, - \, 1 \,
1 \, 1 \, 1 \, - \, 1 \, 1 \, 1 \, 1 \, 1 \, 1 \, - \, - \, 1 \,
- \, 1 \, 1 \, - \, - \, 1 \, - \, 1 \, - \, 1 \, 1 \, - \, - \, - $
\\
\hline
$C$ &
$ 1 \, - \, 1 \, 1 \, 1 \, 1 \, - \, - \, 1 \, - \, - \, - \, 1 \,
- \, - \, - \, - \, - \, 1 \, 1 \, 1 \, 1 \, - \, 1 \, 1 \, - \,
- \, - \, - \, 1 \, 1 \, - \, 1 \, 1 \, 1 \, 1 \, - \, - \, - \,
- \, - \, 1 \, - \, - \, - \, 1 \, - \, - \, 1 \, 1 \, 1 \, 1 \, - $
\\
\hline
$D$ &
$ 1 \, - \, - \, - \, 1 \, - \, 1 \, 1 \, - \, - \, - \, 1 \, - \,
1 \, 1 \, - \, - \, - \, 1 \, - \, - \, - \, - \, 1 \, 1 \, - \,
1 \, 1 \, - \, 1 \, 1 \, - \, - \, - \, - \, 1 \, - \, - \, - \,
1 \, 1 \, - \, 1 \, - \, - \, - \, 1 \, 1 \, - \, 1 \, - \, - \, - $
\\
\hline
\end{tabular}
\caption{A quadruple of near-Williamson matrices of order $53$.}
\label{nearW53}
\end{adjustwidth}
\end{table}

\begin{table}[ht]
\begin{adjustwidth}{-1.5cm}{-1.5cm}
\centering
\scriptsize
\begin{tabular}{|l|l|}
\hline
$A$ &
$ 1 \, 1 \, - \, - \, - \, - \, 1 \, 1 \, 1 \, 1 \, - \, - \, - \, 1 \,
1 \, - \, - \, 1 \, - \, - \, - \, 1 \, - \, 1 \, 1 \, - \, 1 \, 1 \,
1 \, - \, - \, 1 \, 1 \, 1 \, - \, 1 \, 1 \, - \, 1 \, - \, - \, - \,
1 \, - \, - \, 1 \, 1 \, - \, - \, - \, 1 \, 1 \, 1 \, 1 \, - \, - \,
- \, - \, - $
\\
\hline
$B$ &
$ 1 \, - \, - \, - \, - \, 1 \, 1 \, - \, 1 \, 1 \, 1 \, 1 \, - \, 1 \,
- \, 1 \, 1 \, 1 \, - \, 1 \, - \, 1 \, 1 \, - \, 1 \, 1 \, 1 \, - \,
1 \, - \, - \, 1 \, - \, 1 \, 1 \, 1 \, - \, 1 \, 1 \, - \, 1 \, - \,
1 \, 1 \, 1 \, - \, 1 \, - \, 1 \, 1 \, 1 \, 1 \, - \, 1 \, 1 \, - \,
- \, - \, - $
\\
\hline
$C$ &
$ 1 \, - \, - \, 1 \, - \, - \, - \, - \, 1 \, 1 \, 1 \, 1 \, - \, 1 \,
- \, 1 \, - \, 1 \, - \, - \, 1 \, - \, - \, 1 \, - \, - \, - \, 1 \,
1 \, - \, - \, 1 \, 1 \, - \, - \, - \, 1 \, - \, - \, 1 \, - \, - \,
1 \, - \, 1 \, - \, 1 \, - \, 1 \, 1 \, 1 \, 1 \, - \, - \, - \, - \,
1 \, - \, - $
\\
\hline
$D$ &
$ 1 \, - \, - \, - \, - \, - \, 1 \, - \, - \, - \, 1 \, - \, 1 \, - \,
- \, 1 \, 1 \, 1 \, 1 \, 1 \, 1 \, - \, - \, 1 \, - \, - \, - \, 1 \,
1 \, 1 \, 1 \, 1 \, 1 \, - \, - \, - \, 1 \, - \, - \, 1 \, 1 \, 1 \,
1 \, 1 \, 1 \, - \, - \, 1 \, - \, 1 \, - \, - \, - \, 1 \, - \, - \,
- \, - \, - $
\\
\hline
\end{tabular}
\caption{A quadruple of near-Williamson matrices of order $59$.}
\label{nearW59}
\end{adjustwidth}
\end{table}

\begin{table}[ht]
\begin{adjustwidth}{-2cm}{-2cm}
\centering
\scriptsize
\begin{tabular}{|l|l|}
\hline
$A$ &
$ 1 \, 1 \, - \, 1 \, - \, - \, - \, - \, 1 \, 1 \, - \, - \, 1 \, 1 \,
1 \, - \, 1 \, 1 \, - \, - \, - \, - \, 1 \, - \, 1 \, - \, - \, 1 \,
- \, 1 \, - \, - \, - \, 1 \, 1 \, 1 \, 1 \, - \, - \, 1 \, 1 \, - \,
1 \, - \, - \, - \, - \, - \, 1 \, - \, - \, 1 \, 1 \, 1 \, - \, 1 \,
1 \, - \, - \, 1 \, - \, 1 \, 1 \, 1 \, - $
\\
\hline
$B$ &
$ 1 \, 1 \, - \, - \, 1 \, - \, 1 \, 1 \, - \, - \, 1 \, - \, - \, - \,
- \, - \, 1 \, - \, 1 \, - \, - \, - \, 1 \, 1 \, 1 \, 1 \, 1 \, 1 \,
1 \, - \, 1 \, 1 \, - \, - \, 1 \, 1 \, - \, 1 \, 1 \, 1 \, 1 \, 1 \,
1 \, 1 \, - \, - \, - \, 1 \, - \, 1 \, - \, - \, - \, - \, - \, 1 \,
- \, - \, 1 \, 1 \, - \, 1 \, - \, - \, 1 $
\\
\hline
$C$ &
$ 1 \, 1 \, - \, 1 \, 1 \, 1 \, - \, 1 \, - \, - \, 1 \, 1 \, 1 \, - \,
- \, 1 \, 1 \, 1 \, 1 \, 1 \, 1 \, 1 \, - \, - \, - \, 1 \, - \, - \,
1 \, - \, 1 \, - \, - \, - \, - \, 1 \, - \, 1 \, - \, - \, 1 \, - \,
- \, - \, 1 \, 1 \, 1 \, 1 \, 1 \, 1 \, 1 \, - \, - \, 1 \, 1 \, 1 \,
- \, - \, 1 \, - \, 1 \, 1 \, 1 \, - \, 1 $
\\
\hline
$D$ &
$ 1 \, - \, 1 \, - \, - \, 1 \, 1 \, - \, 1 \, 1 \, - \, 1 \, - \, 1 \,
1 \, 1 \, - \, - \, - \, 1 \, 1 \, 1 \, 1 \, 1 \, 1 \, - \, - \, 1 \,
- \, 1 \, - \, 1 \, 1 \, 1 \, 1 \, - \, 1 \, - \, 1 \, - \, - \, 1 \,
1 \, 1 \, 1 \, 1 \, 1 \, - \, - \, - \, 1 \, 1 \, 1 \, - \, 1 \, - \,
1 \, 1 \, - \, 1 \, 1 \, - \, - \, 1 \, - $
\\
\hline
\end{tabular}
\caption{A quadruple of near-Williamson matrices of order $65$.}
\label{nearW65}
\end{adjustwidth}
\end{table}

\begin{table}[ht]
\begin{adjustwidth}{-2cm}{-2cm}
\centering
\scriptsize
\begin{tabular}{|l|l|}
\hline
$A$ &
$ 1 \, - \, 1 \, 1 \, - \, 1 \, - \, 1 \, 1 \, - \, - \, 1 \, - \, - \,
- \, 1 \, - \, - \, 1 \, 1 \, 1 \, - \, - \, - \, - \, 1 \, - \, 1 \,
1 \, - \, - \, 1 \, - \, 1 \, 1 \, - \, 1 \, - \, - \, 1 \, - \, - \,
- \, - \, 1 \, - \, 1 \, - \, 1 \, - \, - \, 1 \, 1 \, 1 \, 1 \, 1 \,
- \, - \, 1 \, 1 \, - \, 1 \, - \, 1 \, - \, - \, - $
\\
\hline
$B$ &
$ 1 \, - \, - \, 1 \, - \, - \, 1 \, 1 \, 1 \, - \, - \, 1 \, 1 \, 1 \,
1 \, 1 \, 1 \, - \, 1 \, 1 \, - \, - \, - \, - \, - \, 1 \, - \, - \,
- \, 1 \, - \, - \, - \, - \, - \, - \, - \, - \, 1 \, - \, - \, - \,
1 \, - \, - \, - \, - \, - \, 1 \, 1 \, - \, 1 \, 1 \, 1 \, 1 \, 1 \,
1 \, - \, - \, 1 \, 1 \, 1 \, - \, - \, 1 \, - \, - $
\\
\hline
$C$ &
$ 1 \, 1 \, - \, - \, 1 \, 1 \, - \, - \, - \, 1 \, - \, 1 \, 1 \, 1 \,
- \, - \, - \, - \, 1 \, 1 \, - \, 1 \, - \, 1 \, - \, 1 \, - \, - \,
1 \, - \, - \, - \, - \, 1 \, 1 \, - \, - \, - \, - \, 1 \, - \, - \,
1 \, - \, 1 \, - \, 1 \, - \, 1 \, 1 \, - \, - \, - \, - \, 1 \, 1 \,
1 \, - \, 1 \, - \, - \, - \, 1 \, 1 \, - \, - \, 1 $
\\
\hline
$D$ &
$ 1 \, - \, 1 \, - \, 1 \, 1 \, - \, - \, - \, - \, - \, - \, 1 \, 1 \,
1 \, 1 \, 1 \, - \, - \, - \, 1 \, - \, - \, - \, - \, 1 \, - \, 1 \,
- \, - \, 1 \, 1 \, - \, 1 \, 1 \, - \, 1 \, 1 \, - \, - \, 1 \, - \,
1 \, - \, - \, - \, - \, 1 \, - \, - \, - \, 1 \, 1 \, 1 \, 1 \, 1 \,
- \, - \, - \, - \, - \, - \, 1 \, 1 \, - \, 1 \, - $
\\
\hline
\end{tabular}
\caption{A quadruple of near-Williamson matrices of order $67$.}
\label{nearW67}
\end{adjustwidth}
\end{table}

\section{New Quaternary Hadamard Matrices}
\label{complexxx}

A \emph{quaternary Hadamard matrix} of order $n$ is an $n\times n$ matrix $H$ with entries from $\{-\ii,-1,1,\ii\}$ satisfying $HH^*=nI,$ where $\ii$ denotes the imaginary unit and $H^*$ denotes the conjugate transpose of $H$. It is well known that the order of a quaternary Hadamard matrix must be either $1$ or an even integer \cite{Sebe}. Whether a quaternary Hadamard matrix exists for every even order remains an open problem. Following the recent construction by Sz\"oll\H{o}si of a quaternary Hadamard matrix of order $94$ \cite{Sz}, the smallest order for which the existence of a quaternary Hadamard matrix appears to be unresolved is $118$; see Table 11.2 of \cite{94}.

Suppose that four $n\times n$ $\{\pm1\}$-matrices $A,B,C,D$ satisfy both the additivity condition \eqref{additivity} and the amicability condition \eqref{amicability}. It is straightforward to verify that the block matrix

\begin{equation}
\label{complex}
\frac{1+\ii}{2}
\begin{bmatrix}
A+\ii B & C+\ii D \\
\ii C + D & -\ii A - B
\end{bmatrix}
\end{equation}
is a quaternary Hadamard matrix of order $2n$ \cite{Tur}. Applying construction \eqref{complex} to the newly discovered near-Williamson matrices yields several new quaternary Hadamard matrices. Specifically, substituting $\{AR,B,C,D\}$ for the near-Williamson matrices of orders $59,\quad 65,\quad 67,\quad 103,\quad 133$ produces new quaternary Hadamard matrices of orders $118,\quad 130,\quad 134,\quad 206,\quad 266,$ respectively. To the best of our knowledge, these are the first known examples of quaternary Hadamard matrices in those orders.

Likewise, applying the same construction to the new near-Williamson matrix of order $47$ yields a new quaternary Hadamard matrix of order $94.$ This provides an alternative construction of a quaternary Hadamard matrix in that order.

\section{The Algorithm}
\label{alg}

In this section we describe the search algorithm used to find near-Williamson matrices of odd orders. The overall strategy is similar to the algorithm employed in \cite{Tay59} for the classification of Williamson matrices. Throughout this section, let $\{A,B,C,D\}$ be a quadruple of near-Williamson matrices of odd order $n$, with $B$, $C$, and $D$ symmetric circulant matrices.

\subsection{Spectral Conditions}

Since the matrices $AA^\top,\quad BB^\top,\quad CC^\top,\quad DD^\top$ are all circulant, they commute pairwise. Hence they possess a common basis of eigenvectors. For each matrix $X\in\{A,B,C,D\}$, let $\lambda_0(X),\lambda_1(X),\ldots,\lambda_{n-1}(X)$ denote the eigenvalues of $XX^\top$ with respect to this common eigenbasis. By the additivity condition \eqref{additivity}, we obtain
\begin{equation}
\lambda_i(A)+
\lambda_i(B)+
\lambda_i(C)+
\lambda_i(D)
=
4n,
\qquad
i=0,1,\ldots,n-1.
\label{eigen}
\end{equation}

Recall that if $X$ is a circulant matrix with first row $(x_0,x_1,\ldots,x_{n-1}),$ then its eigenvalues are $\mu_i = \sum_{k=0}^{n-1} x_k\omega_i^k, \qquad i=0,1,\ldots,n-1,$ where $\omega_i=e^{2\pi \ii i/n}$ are the $n$th roots of unity. From this representation, one obtains an expression for the eigenvalues of $XX^\top$ in terms of row inner products. Let $p_k(X)$ denote the inner product of row $0$ and row $k$ of the circulant matrix $X$. Then
\begin{equation}
\lambda_i(X)
=
\sum_{k=0}^{n-1}
p_k(X)
\cos\!\left(
\frac{2ik\pi}{n}
\right),
\qquad
i=0,1,\ldots,n-1.
\label{eigenmu}
\end{equation}

The relations \eqref{eigen} and \eqref{eigenmu} provide powerful constraints that substantially reduce the search space. Since all eigenvalues of $AA^\top$, $BB^\top$, $CC^\top$, and $DD^\top$ are nonnegative, equation \eqref{eigen} implies
\begin{align}
\lambda_i(A)
&\le 4n,
\label{eigenA}
\\
\lambda_i(B)
&\le 4n,
\label{eigenB}
\end{align}
and
\begin{equation}
\lambda_i(A)+\lambda_i(B)
\le 4n,
\qquad
i=0,1,\ldots,n-1.
\label{eigenAB}
\end{equation}
These inequalities eliminate large numbers of potential candidates during the search.

The most useful instance of \eqref{eigen} occurs for $i=0$. Using \eqref{eigenmu}, we obtain
\begin{equation}
a^2+b^2+c^2+d^2
=
4n,
\label{foursq}
\end{equation}
where $a, b, c, d$ are the row sums of $A$, $B$, $C$, and $D$, respectively. Equation \eqref{foursq} significantly reduces the collection of admissible row sums and therefore dramatically decreases the number of candidate matrices.

By negating matrices if necessary, we may assume without loss of generality that the first entry of the first row of each matrix is $1$. Since $B$, $C$, and $D$ are symmetric circulant matrices, their row sums satisfy $b\equiv c\equiv d\equiv n \pmod 4.$ We also assume, without loss of generality, that $|b| \ge |c| \ge |d|.$

\subsection{Construction of Candidates}

We begin by constructing the set $\mathcal{M}$ of all symmetric circulant $\{\pm1\}$-matrices of order $n$ having row sum $b$ and first entry equal to $1$. Next, the automorphism group of $\Zn$ is applied to $\mathcal{M}$. From every orbit we select a representative matrix $X$ and test whether all eigenvalues of $XX^\top$ are bounded above by $4n$. Matrices satisfying this condition are retained, producing a reduced set $\mathcal{M}'.$ This set contains all possible candidates for $B$ and is stored in memory for later use.

We then consider every circulant $\{\pm1\}$-matrix $Y$ of order $n$ with row sum $a$ and first entry equal to $1$. If there exists a circulant signed permutation matrix $P$ such that $YP$ is lexicographically larger than $Y$, then $Y$ is discarded. Otherwise, we compute the eigenvalues of $YY^\top$. If any eigenvalue exceeds $4n$, then $Y$ is discarded. If all eigenvalues satisfy the required bound, then $Y$ becomes a candidate for $A$. We set $A=Y$ and continue the search. For each matrix $Z\in\mathcal{M}',$ we determine whether any eigenvalue of $AA^\top+ZZ^\top$ exceeds $4n$. Whenever no eigenvalue exceeds this bound, $Z$ becomes a valid candidate for $B$, and we set $B=Z.$ The remaining task is then to determine all admissible matrices $C$ and $D$.

\subsection{Determining the Matrices \texorpdfstring{$C$}{C} and
\texorpdfstring{$D$}{D}}

At this stage the search can be reduced substantially by exploiting the relations imposed by the additivity condition. To illustrate the idea, consider the case $n=7$. Let $(1,c_1,c_2,c_3,c_3,c_2,c_1)$ be the first row of $C$, and let $(1,d_1,d_2,d_3,d_3,d_2,d_1)$ be the first row of $D$. Using \eqref{additivity} and comparing the inner products of row $0$ with rows $1$, $2$, and $3$, we obtain
\begin{equation}
\label{cdeqns}
\left\{
\begin{aligned}
2c_1+2c_1c_2+1+2c_2c_3
&+
2d_1+2d_1d_2+1+2d_2d_3
=
p_1,
\\
2c_2+1+2c_1c_3+2c_2c_3
&+
2d_2+1+2d_1d_3+2d_2d_3
=
p_2,
\\
2c_3+2c_1c_2+2c_1c_3+1
&+
2d_3+2d_1d_2+2d_1d_3+1
=
p_3,
\end{aligned}
\right.
\end{equation}
where the integers $p_1,\ p_2,\ p_3$ depend only on the previously chosen matrices $A$ and $B$.

Dividing all equations in \eqref{cdeqns} by $2$ and using the fact that $xy \equiv x+y-1 \pmod 4, \qquad x,y\in\{-1,1\},$ we obtain a system of linear congruences modulo $4$:
\begin{equation}
\label{CDmod4}
\left\{
\begin{aligned}
c_3+d_3 &= q_1,
\\
c_1+d_1 &= q_2,
\\
c_2+d_2 &= q_3,
\end{aligned}
\right.
\end{equation}
where each $q_i$ depends only on the corresponding value $p_i$. The congruences \eqref{CDmod4} show that, once $A$ and $B$ are fixed, each variable $d_i$ is either equal to $c_i$ or to $-c_i$. More generally, the equation involving $q_k$ contains the unique sum $c_i+d_i$ corresponding to the index satisfying $2i\equiv \pm k \pmod n.$ Since $n$ is odd, this establishes a bijection between the equations and the indices. Consequently all signs are determined directly.

We therefore write $d_i=s_i c_i, \qquad s_i\in\{-1,1\},$ where the signs $s_i$ are known once $A$ and $B$ have been chosen. Substituting $d_i=s_i c_i$ into \eqref{cdeqns} yields
\begin{equation}
\label{cdeqnsother}
\left\{
\begin{aligned}
2(1+s_1)c_1
&+
2(1+s_1s_2)c_1c_2
+
2(1+s_2s_3)c_2c_3
=
p_1-2,
\\
2(1+s_2)c_2
&+
2(1+s_1s_3)c_1c_3
+
2(1+s_2s_3)c_2c_3
=
p_2-2,
\\
2(1+s_3)c_3
&+
2(1+s_1s_2)c_1c_2
+
2(1+s_1s_3)c_1c_3
=
p_3-2.
\end{aligned}
\right.
\end{equation}
Dividing each equation by $4$, we convert \eqref{cdeqnsother} into a linear system modulo $4$ whose variables are the entries of $C$. The resulting system is then transformed into standard reduced echelon form.

In practice, this procedure is surprisingly efficient. The number of resulting $\{\pm1\}$-solutions is typically very small, and in many cases a unique solution is obtained. An additional improvement comes from observing that the coefficients of \eqref{cdeqnsother} depend only on the sign pattern $(s_1,s_2,s_3,\ldots)$ and not on the specific choice of $A$ and $B$. The matrices $A$ and $B$ affect only the right-hand side. Since the same sign pattern arises repeatedly during the search, we compute the reduced echelon form only once for each sign pattern and store the result. Whenever the same pattern reappears, all solutions are obtained simply by substituting the new right-hand side into the stored echelon form. This caching strategy significantly reduces the overall computation time.

Finally, every solution produced by the linear system is substituted back into the original additivity condition $AA^\top+BB^\top+CC^\top+DD^\top=4nI$ to verify correctness. All valid solutions are retained as candidate near-Williamson quadruples.

\medskip \noindent \textbf{Algorithm for Finding All Near-Williamson Matrices of Odd Order $n$}

\begin{enumerate}[label=\textbf{A\arabic*.},leftmargin=*,labelsep=0.5em,%
topsep=4pt,itemsep=3pt,parsep=0pt]

\item Initialize $\mathcal{S}=\varnothing.$

\item Determine all integer solutions of $a^{2}+b^{2}+c^{2}+d^{2}=4n$ satisfying $|b|\ge |c|\ge |d|$ and $b\equiv c\equiv d\equiv n \pmod 4.$

\item For every solution \((a,b,c,d)\), perform the following steps.

\begin{enumerate}[label=\textbf{B\arabic*.},leftmargin=*,labelsep=0.5em,%
topsep=3pt,itemsep=3pt,parsep=0pt]

\item Construct the set \(\mathcal{M}\) consisting of all symmetric circulant \(\{\pm1\}\)-matrices of order \(n\) with row sum \(b\) and first entry equal to \(1\).

\item Apply the action of the automorphism group of \(\Zn\) to \(\mathcal{M}\). For every orbit select a representative \(X\). Retain only those representatives satisfying $\lambda_i(X)\le 4n, \qquad i=0,\ldots,n-1.$ Denote the resulting collection by $\mathcal{M}'.$

\item For every circulant \(\{\pm1\}\)-matrix \(Y\) of order \(n\) with row sum \(a\) and first entry equal to \(1\), perform the following steps.

\begin{enumerate}[label=\textbf{C\arabic*.},leftmargin=*,labelsep=0.5em,%
topsep=3pt,itemsep=3pt,parsep=0pt]

\item If there exists a circulant signed permutation matrix \(P\) such that \(YP\) is lexicographically larger than \(Y\), discard \(Y\). Otherwise proceed to the next step.

\item Compute the eigenvalues of \(YY^\top\). If any eigenvalue exceeds \(4n\), discard \(Y\). Otherwise set $A=Y$ and proceed.

\item For every matrix $Z\in\mathcal{M}',$ perform the following steps.

\begin{enumerate}[label=\textbf{D\arabic*.},leftmargin=*,labelsep=0.5em,%
topsep=3pt,itemsep=3pt,parsep=0pt]

\item Compute the eigenvalues of $AA^\top+ZZ^\top .$ If any eigenvalue exceeds \(4n\), discard \(Z\). Otherwise set $B=Z$ and proceed.

\item Determine the signs $s_i\in\{-1,1\}$ such that $d_i=s_i c_i$ using the congruence system analogous to \eqref{CDmod4}. The congruence involving \(q_k\) determines the sign corresponding to the index satisfying $2i\equiv\pm k \pmod n.$

\item Form the system of equations arising from $AA^\top+BB^\top+CC^\top+DD^\top=4nI,$ expressed solely in terms of the entries of \(C\), as in \eqref{cdeqnsother}. Divide all equations by \(4\).

\item Convert the resulting system into a linear system modulo \(4\). Retrieve the stored reduced echelon form corresponding to the sign pattern $(s_1,s_2,\ldots).$ If this sign pattern has not occurred previously, compute the reduced echelon form and store it for future use.

\item Substitute the current right-hand side into the stored reduced echelon form and determine all \(\{\pm1\}\)-solutions for \(C\).

\item For each candidate solution, construct $D=(s_i c_i)$ and verify the original equation $AA^\top+BB^\top+CC^\top+DD^\top=4nI.$ Add every valid quadruple $(A,B,C,D)$ to the set \(\mathcal{S}\).

\end{enumerate}

\end{enumerate}

\end{enumerate}

\item Check all elements of \(\mathcal{S}\) for equivalence and retain a representative from each equivalence class.

\end{enumerate}

\subsection{Using Multipliers of Order Three}
\label{improvement}

We now describe the computational improvement that enabled us to find examples of near-Williamson matrices in orders beyond $59$. Let $u$ be an element of multiplicative order $3$ in $\Zn$, and let $\sigma(k)=uk$ be the corresponding automorphism of $\Zn$. When $n=p$ is prime, such an element exists if and only if $p\equiv 1 \pmod 6.$ More generally, an element of order $3$ exists in $\Zn$ if and only if either $9\mid n$ or at least one prime divisor of $n$ is congruent to $1$ modulo $3$. The nonzero elements of $\Zn$ decompose into orbits of the form $\{k,uk,u^{2}k\},$ each orbit having size either $3$ or $1$.

We restrict our search to quadruples $\{A,B,C,D\}$ that are invariant under the action of $\sigma$. Up to relabelling of the matrices $B$, $C$, and $D$, this can occur in exactly two ways.

\begin{enumerate}[label=\textnormal{(\roman*)}]

\item Every matrix is invariant under $\sigma$: $X^\sigma=X, \qquad X\in\{A,B,C,D\}.$ In this case, the first rows of all four matrices are constant on the orbits of $\sigma$.

\item The matrices satisfy $A^\sigma=A, \qquad C=B^\sigma, \qquad D=C^\sigma.$ Hence the matrices $B$, $C$, and $D$ are obtained from a single symmetric circulant matrix via the action of $\sigma$. Since automorphisms preserve symmetry of circulant matrices, $C$ and $D$ are symmetric whenever $B$ is.

\end{enumerate}

The following theorem formalizes the key observation that makes this approach computationally effective.

\begin{theorem}
\label{thmorder3}
Let $\sigma(k)=uk,$ where $u$ has multiplicative order $3$ in $\Zn$. Suppose that circulant $\{\pm1\}$-matrices $A,B,C,D$ form a quadruple of one of the following two types:

\begin{enumerate}[label=\textnormal{(\roman*)}]

\item $X^\sigma=X \quad \text{for all } X\in\{A,B,C,D\};$

\item $A^\sigma=A, \qquad C=B^\sigma, \qquad D=C^\sigma .$

\end{enumerate}

Then the quantity $p_k(A)+p_k(B)+p_k(C)+p_k(D)$ is constant on every orbit of $\sigma$.

\end{theorem}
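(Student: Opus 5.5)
The plan is to reduce everything to a single identity describing how the periodic autocorrelation $p_k$ transforms under an automorphism of $\Zn$, and then to treat the two cases separately.

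\textbf{Step 1.} Let $X$ be circulant with first row $(x_0,\ldots,x_{n-1})$. Then
\[
p_k(X)=\sum_{j\in\Zn} x_j x_{j+k}.
\]
The matrix $X^\sigma=[x_{\sigma(i)\sigma(j)}]$ has $(i,j)$ entry $x_{0,uj-ui}$. So $X^\sigma$ is circulant with first row entries $x_{uj}$. Therefore
\[
p_k(X^\sigma)=\sum_{j} x_{uj}x_{uj+uk}=\sum_{j'} x_{j'}x_{j'+uk}=p_{uk}(X),
\]
where the reindexing $j'=uj$ is valid because $u$ is a unit, so multiplication by $u$ permutes $\Zn$. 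The key identity is thus
\[
p_k(X^\sigma)=p_{\sigma(k)}(X),
\]
and iterating it gives $p_k(X^{\sigma^2})=p_{u^2k}(X)$.

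\textbf{Step 2 (case (i)).} If $X^\sigma=X$, then Step 1 gives $p_{uk}(X)=p_k(X)$. Hence each $p_k(X)$ is individually constant on every orbit $\{k,uk,u^2k\}$, and so is the sum of the four.

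\textbf{Step 3 (case (ii)).} For $A$ the argument of Step 2 applies. For the other three, write
\[
S_k=p_k(B)+p_k(C)+p_k(D)=p_k(B)+p_k(B^\sigma)+p_k(B^{\sigma^2}).
\]
By Step 1,
\[
S_k=p_k(B)+p_{uk}(B)+p_{u^2k}(B).
\]
This is the sum of $p_j(B)$ over the orbit of $k$, counted as the multiset $\{k,uk,u^2k\}$. Replacing $k$ by $uk$ gives the terms $p_{uk}(B)+p_{u^2k}(B)+p_{u^3k}(B)$. Since $u^3=1$, this is the same sum cyclically permuted, so $S_{uk}=S_k$.

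One point needs care here. The hypothesis only says $D=C^\sigma=B^{\sigma^2}$. I should also make sure nothing requires $D^\sigma=B$. It does not, because the cyclic shift of the sum only uses $u^3\equiv1$ at the level of indices.

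The only real obstacle is Step 1: getting the direction of the substitution right (whether the result is $p_{uk}$ or $p_{u^{-1}k}$). Either direction suffices, since the orbit is closed under both $u$ and $u^{-1}$.
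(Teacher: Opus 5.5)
Your proposal is correct and follows the paper's proof essentially step for step. Both rest on the identity $p_k(X^\sigma)=p_{uk}(X)$, then use termwise invariance in case (i), and in case (ii) observe that $p_k(B)+p_{uk}(B)+p_{u^2k}(B)$ is only cyclically permuted when $k$ is replaced by $uk$; your Step 1 additionally spells out the reindexing (valid because $u$ is a unit), which the paper leaves implicit.
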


\begin{proof}
Let $X$ be a circulant matrix. If the first row of $X$ is $(x_0,x_1,\ldots,x_{n-1}),$ then the first row of $X^\sigma$ is given by $k\longmapsto x_{uk}.$ Consequently, $p_k(X^\sigma)=p_{uk}(X).$

Suppose first that condition (i) holds. Then $p_k(X)=p_{uk}(X)$ for every matrix $X\in\{A,B,C,D\}$. Therefore their sum is constant on every orbit of $\sigma$. Now suppose that condition (ii) holds. Since $A^\sigma=A,$ we have $p_k(A)=p_{uk}(A).$ Furthermore, $C=B^\sigma, \qquad D=C^\sigma,$ so $p_k(B)+p_k(C)+p_k(D) = p_k(B)+p_{uk}(B)+p_{u^2k}(B).$ Replacing $k$ by $uk$ merely permutes these three terms. Hence the sum remains unchanged. Therefore $p_k(A)+p_k(B)+p_k(C)+p_k(D)$ is constant on each orbit of $\sigma$.

\end{proof}

In either of the two situations above, the number of independent variables in the search decreases to roughly one third of its original value. By Theorem~\ref{thmorder3}, the number of correlation conditions that must be checked decreases by approximately the same factor. This reduction makes it possible to investigate orders far beyond the range accessible to the exhaustive search described earlier. The examples of near-Williamson matrices in orders $65 \qquad\text{and}\qquad 67$ were found using this method. The order-$65$ example is of type~(i) with $u=16,$ while the order-$67$ example is of type~(ii) with $u=29.$ The same approach was also used to obtain the examples in all the additional orders listed in Section~\ref{result}, with the exception of orders $53$ and $59$.

\bigskip

\section*{Acknowledgement}
This research was initiated during a visit by Behruz Tayfeh-Rezaie to
the University of Lethbridge. He expresses his sincere gratitude to
Hadi Kharaghani for the hospitality and support provided during his
visit. 
The research was made possible in part by the support provided by the Digital Research Alliance of Canada. Hadi Kharaghani and Vlad Zaitsev were supported by the Natural Sciences and Engineering Research Council of Canada (NSERC).

\end{document}